\newtheorem{theorem}{Theorem}[section]
\newtheorem{lemma}[theorem]{Lemma}
\theoremstyle{definition}
\theoremstyle{remark}
\newtheorem{rem}{Remark}[section]
\newcommand\Nbd{\operatorname{Nbd}}
\newcommand\Var{\operatorname{Var}}
\newcommand\Capa{\operatorname{Cap}}
\begin{document}
\hypersetup{pageanchor=false}
\pgfsetxvec{\pgfpoint{8pt}{0}}
\pgfsetyvec{\pgfpoint{0}{8pt}}
\author{Yinshan Chang\footnote{Department of Mathematics, University of Sichuan, Chengdu, China. Email: ychang@scu.edu.cn
Max-Planck-Institute for Mathematics in the Sciences, 04103, Leipzig, Germany.}}
\title{Two observations on the capacity of the range of simple random walks on $\mathbb{Z}^3$ and $\mathbb{Z}^4$}
\date{}
\maketitle

\begin{abstract}
We prove a weak law of large numbers for the capacity of the range of simple random walks on $\mathbb{Z}^{4}$. On $\mathbb{Z}^{3}$, we show that the capacity, properly scaled, converges in distribution towards the corresponding quantity for three dimensional Brownian motion.
\end{abstract}

\section{Introduction}
Let $(X_n)_n$ be a simple random walk on $\mathbb{Z}^d$ with $d=3$ or $4$. We are interested in the scaling limit of the capacity of the random set
\[X[0,n]\overset{\text{def}}{=}\{X_0,\ldots,X_n\},\]
where the capacity $\Capa(F)$ of a set $F$ of vertices on $\mathbb{Z}^d$ is defined as the sum of escaping probabilities:
\begin{equation}\label{defn: capacity srw}
 \Capa(F)\overset{\text{def}}{=}\sum_{x\in F}\mathbb{E}^{x}[\forall n\geq 1, X_n\notin F].
\end{equation}
The capacity of the range of random walks is closely related with the intersection probability of two independent random walks. In fact, many estimations on $\Capa(X[0,n])$ were deduced from that. We refer the reader to the Lawler’s classical book \cite{LawlerMR2985195} and the reference there.

In the present paper, we show that when $d=4$, the second moment of $\Capa(F)$ is asymptotically equivalent to the square of the first moment, which implies a weak law of large numbers:
\begin{theorem}\label{thm: concentration capacity 4d}
For a SRW $(X_n)_n$ on $\mathbb Z^4$,
\begin{equation}
 \lim_{n\rightarrow\infty}\Var(\Capa(X[0,n]))/\mathbb{E}[\Capa(X[0,n])]^2=0,
\end{equation}
which implies that
\begin{equation}\label{eq: 4d weak law of large numbers}
 \Capa(X[0,n])/\mathbb{E}[\Capa(X[0,n])]\overset{\text{Probability}}\rightarrow 1,\quad n\rightarrow\infty.
\end{equation}
\end{theorem}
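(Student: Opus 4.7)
The plan is to derive \eqref{eq: 4d weak law of large numbers} from a second-moment bound $\mathbb{E}[\Capa(X[0,n])^2] = (1+o(1))\,\mathbb{E}[\Capa(X[0,n])]^2$ via Chebyshev, so the main task is to prove this asymptotic. Write $R_n \overset{\text{def}}{=} X[0,n]$. A convenient starting point is to rewrite the capacity as a sum indexed by time. Introducing an auxiliary walk $\tilde X^{(k)}$ independent of $X$ and starting from $X_k$, the last-visit decomposition gives
\begin{equation*}
\Capa(R_n) = \sum_{k=0}^{n} T_k, \qquad T_k \overset{\text{def}}{=} \mathbf{1}\{X_j \neq X_k,\; k < j \leq n\}\cdot \mathbb{P}[\tilde X^{(k)}_m \notin R_n,\; \forall m\geq 1 \mid X];
\end{equation*}
each vertex of $R_n$ is counted exactly once, via its last visit in $[0,n]$, and weighted by its escape probability. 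Classical Green's function estimates on $\mathbb{Z}^4$ yield $\mathbb{E}[T_k] \asymp 1/\log n$ for $k$ bounded away from the endpoints, hence $\mathbb{E}[\Capa(R_n)] \asymp n/\log n$ and the target scale is $\mathbb{E}[\Capa(R_n)]^2 \asymp n^2/(\log n)^2$.

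For the variance, I would expand $\mathbb{E}[\Capa(R_n)^2] = \sum_{0\leq j,k\leq n} \mathbb{E}[T_j T_k]$ and split into \emph{close} pairs with $|j - k| < n^{1-\epsilon}$ and \emph{distant} pairs with $|j - k| \geq n^{1-\epsilon}$ for a small fixed $\epsilon > 0$. The close pairs are handled by the trivial bound $T_j, T_k \leq 1$, contributing at most $O(n^{2-\epsilon}) = o(n^2/(\log n)^2)$. For the distant pairs, apply the Markov property at the midpoint $\mu = \lfloor (j+k)/2 \rfloor$, which makes $X[0,\mu]$ and $X[\mu,n]$ conditionally independent given $X_\mu$. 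The core of the argument is to replace the global escape probability appearing in $T_k$ (for $k<\mu$) by its local analogue depending only on $X[0,\mu]$, and symmetrically for $T_j$ with $X[\mu,n]$; the truncated quantities then factor across the two independent segments, and their expectations reproduce, up to a $1+o(1)$ factor, $\mathbb{E}[T_j]\mathbb{E}[T_k]$. Summing over distant pairs recovers $(1+o(1))\,\mathbb{E}[\Capa(R_n)]^2$.

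The hardest step will be controlling the error in this truncation of the escape probability. One must show that, averaged over $X$, the probability that the auxiliary walk $\tilde X^{(k)}$ avoids the nearby piece $X[0,\mu]$ but nevertheless hits the far-away piece $X[\mu,n]$ is of order $o(1/\log n)$, small compared to the leading $1/\log n$ scale of the escape probability itself. At the critical dimension four this reduces to a quantitative intersection estimate between two independent SRWs separated by a macroscopic number of steps: the Green's function $G(x) \asymp |x|^{-2}$ and the capacity $\Capa(R_n) \asymp n/\log n$ both carry logarithmic corrections that must cancel precisely to leave only an $o(1)$ relative error. Once this truncation is in place, combining the estimates from the close and distant regimes gives $\Var(\Capa(R_n)) = o(\mathbb{E}[\Capa(R_n)]^2)$, and Chebyshev's inequality finishes the proof of \eqref{eq: 4d weak law of large numbers}.
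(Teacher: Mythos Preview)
Your outline differs from the paper's argument, and the point where you defer to the ``hardest step'' is a genuine gap rather than a routine estimate. The paper does not decompose $\Capa(X[0,n])$ as $\sum_k T_k$ at all. Instead it writes $\Capa(X^0[0,n])$ as a hitting probability from a far point $x_0=(Kn,0,0,0)$, so that with two further independent walks $X^1,X^2$ from $x_0$ and $\tau_i=\inf\{j:X^0_j\in X^i[0,\infty)\}$ the second moment becomes, up to $1+O(K^{-1})$, a multiple of $\mathbb{P}[\tau_1\le n,\tau_2\le n]\le 2\,\mathbb{P}[\tau_1\le\tau_2\le n]$. Applying the strong Markov property for $X^0$ at time $\tau_1$ and integrating out $X^2$ bounds this by $k(n)\cdot\mathbb{E}[\,2k(n-\tau_1)\mid\tau_1\le n\,]$, where $k(m)=\mathbb{E}[\Capa(X[0,m])]$. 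The decisive observation is the exact relation $\mathbb{P}[\tau_1\le s\mid\tau_1\le n]=(1+o(1))\,k(s)/k(n)$, which together with $k(n)\sim\frac{\pi^2}{8}\,n/\log n$ (Lemma~\ref{lem: mean capacity asymptotics 4d}) forces the conditional law of $\tau_1/n$ to converge to the uniform distribution on $[0,1]$; then $\mathbb{E}[\,2k(n-\tau_1)\mid\tau_1\le n\,]/k(n)\to\int_0^1 2(1-u)\,du=1$. No per-pair decoupling and no separate intersection-error estimate are needed.

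In your scheme, after truncating $T_j$ to a $\tilde T_j$ depending only on $X[0,\mu]$ and $T_k$ to a $\tilde T_k$ depending only on $X[\mu,n]$, conditional independence gives $\mathbb{E}[\tilde T_j\tilde T_k]=\mathbb{E}[\tilde T_j]\,\mathbb{E}[\tilde T_k]$, but you then need $\mathbb{E}[\tilde T_j]=(1+o(1))\,\mathbb{E}[T_j]$ \emph{for each individual} $j$, and no mechanism is offered: only the sums $\sum_{j\le\mu}\tilde T_j=\Capa(X[0,\mu])$ and $\sum_{j\le n}T_j=\Capa(X[0,n])$ are identifiable, and since $\mu=\lfloor(j+k)/2\rfloor$ varies with the pair the resummation does not close. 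Separately, the truncation error you isolate---that $\tilde X^{(j)}$ escapes $X[0,\mu]$ yet hits $X[\mu,n]$---is precisely the critical four-dimensional intersection event, and saying the logarithms ``must cancel precisely'' is restating the goal, not proving it. The paper's route through the uniform limit of $\tau_1/n$ is exactly what supplies this cancellation for free, using only the first-moment asymptotic as input.
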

Such a weak law of large numbers was conjectured by Asselah, Schapira and Sousi \cite[Section~6]{SRWCapCLT}. Besides, they also expect a random scaling limit for $\Capa(X[0,n])/\mathbb{E}[\Capa(X[0,n])]$ for $d=3$ and there is no such kind of weak law of large numbers on $\mathbb{Z}^3$. We affirm this as a corollary (see Remark~\ref{rem: 3d no weak LLN}) of our second main result, which states that as $n\rightarrow\infty$, $\Capa(X[0,n])/\sqrt{n}$ has a random limit in distribution, which is the corresponding quantity for three dimensional Brownian motion. To be more precise, let $(M_t)_{t\geq 0}$ be the standard Brownian motion on $\mathbb{Z}^{3}$. Recall the Green function for Brownian motions on $\mathbb{R}^{3}$, see e.g. \cite[Theorem~3.33]{MortersPeresMR2604525}:
 \[G(x,y)=\int_{0}^{\infty}(2\pi t)^{-3/2}e^{-||x-y||_2^2/2t}=\frac{1}{2\pi}||x-y||_2^{-1}.\]
The corresponding (Brownian motion) capacity of a Borel set $F$ is given by
 \begin{equation}\label{eq: bm green capacity}
  \Capa_{BM}(F)^{-1}=\inf\left\{\int\int G(x,y)\mu(\mathrm{d}x)\mu(\mathrm{d}y):\mu\text{ is a probability measure on }F\right\},
 \end{equation}
 see e.g. \cite[Definition~8.18]{MortersPeresMR2604525}. We have the following result on the fluctuation of $\Capa(X[0,n])$ on $\mathbb{Z}^3$.
\begin{theorem}\label{thm: scaling limit capacity 3d}
For a SRW $(X_n)_n$ on $\mathbb Z^3$, as $n\rightarrow\infty$, $\Capa(X[0,n])/\sqrt{n}$ converges to $\frac{1}{3\sqrt{3}}\Capa_{BM}(M[0,1])$ in distribution.
\end{theorem}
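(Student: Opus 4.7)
My plan is to transfer the problem to the continuum via the invariance principle, and then compare the discrete and Brownian variational characterisations of capacity.

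\textbf{Step 1 (Coupling).} Using a Komlós-Major-Tusnády strong approximation, construct on a common probability space the walk $X$ and a Brownian motion $W$ on $\mathbb{R}^3$ with covariance $\frac{1}{3}I$ (matching the step covariance), so that $\max_{0\le k\le n}|X_k-W_k|=O(\log n)$ almost surely. Set $M_t=\sqrt 3\,W_t$, a standard three-dimensional Brownian motion; then the rescaled random set $X[0,n]/\sqrt n$ lies within Hausdorff distance $O(\log n/\sqrt n)$ of $M[0,1]/\sqrt 3$, path by path.

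\textbf{Step 2 (Energy comparison).} Both capacities admit a dual energy formulation, namely $\Capa(F)^{-1}=\inf_\mu\sum_{x,y\in F}G_{\mathrm{SRW}}(x,y)\mu(x)\mu(y)$ and \eqref{eq: bm green capacity}, where the infimum is taken over probability measures on $F$ and is attained at the normalised equilibrium measure. The local CLT yields the asymptotic $G_{\mathrm{SRW}}(0,x)=\frac{3}{2\pi|x|_2}+O(|x|_2^{-2})$ (the factor $3$ reflects the step covariance $\frac{1}{3}I$), so that for a probability measure $\mu$ on $X[0,n]$, with rescaled pushforward $\tilde\mu$ on $X[0,n]/\sqrt n$,
\[\sum_{x,y}G_{\mathrm{SRW}}(x,y)\,\mu(x)\mu(y)=\frac{3}{\sqrt n}\iint G(\tilde x,\tilde y)\,\tilde\mu(d\tilde x)\tilde\mu(d\tilde y)+\text{error},\]
the error being governed by a short-distance cut-off and the $O(|x|^{-2})$ correction.

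\textbf{Step 3 (Two-sided squeeze).} For the lower bound on $\Capa(X[0,n])/\sqrt n$, take the Brownian equilibrium measure on $M[0,1]/\sqrt 3$, transport it via the coupling to a probability measure on $X[0,n]/\sqrt n$, and plug it into the discrete variational formula. For the upper bound, take the SRW equilibrium measure on $X[0,n]$, push it onto $M[0,1]/\sqrt 3$ through the coupling, and plug it into \eqref{eq: bm green capacity}. Combined with the Brownian scaling $\Capa_{BM}(\lambda F)=\lambda\Capa_{BM}(F)$, which gives $\Capa_{BM}(M[0,1]/\sqrt 3)=\Capa_{BM}(M[0,1])/\sqrt 3$, both bounds converge in probability to $\frac{1}{3\sqrt 3}\Capa_{BM}(M[0,1])$, which yields the claimed convergence in distribution.

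\textbf{Main obstacle.} The delicate point is to control the short-range contributions to the energy, since $G$ is singular on the diagonal. Concretely, one must show that neither the SRW equilibrium measure nor the discretised Brownian equilibrium measure concentrates on microscopic scales, so that pairs $(x,y)$ with $|x-y|=o(\sqrt n)$ contribute $o(1/\sqrt n)$ to the energy. This reduces to quantitative moment bounds on self-intersection local times of the walk and the Brownian path. Once this cut-off is secured, the rest of the argument is a routine matching of the lattice and continuous variational problems via the coupling of Step 1.
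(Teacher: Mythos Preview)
Your approach is genuinely different from the paper's, and the gap you flag as the ``main obstacle'' is real and not closed by the reduction you propose. The paper never touches the variational energy formula. Instead it writes $\Capa(X[0,n])$ as a hitting probability from a far point $y_n$ (last-exit decomposition), and then uses Lawler's $d=3$ intersection estimate: an independent walk or Brownian motion starting within distance $n^{1/2-\epsilon}$ of a typical path of length $n$ hits that path with probability $1-O(n^{-\delta})$. This lets the paper replace the path by a tube $\Nbd(X[0,n],n^{1/2-\epsilon})$ at the cost of a $1+o(1)$ factor in the hitting probability; once the objects are fat tubes, the $n^{1/4+\epsilon}$ Skorokhod error is harmless and a sandwich $\Nbd(M^0,n^{1/2-4\epsilon})\subset\Nbd(X^0,n^{1/2-2\epsilon})\subset\Nbd(M^0,n^{1/2-\epsilon})$ finishes the comparison. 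In short, the paper coarse-grains \emph{before} comparing, so nothing below scale $n^{1/2-\epsilon}$ ever matters.

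Your argument, by contrast, works directly on the fractal paths, and the short-range energy is controlled by how the \emph{equilibrium measure} distributes mass, not by self-intersection local times (which concern the \emph{occupation measure}). For the upper bound on $\Capa(X[0,n])$ you need the SRW equilibrium measure, transport it to the Brownian trace, and bound its Brownian energy from below; this requires knowing that no ball of radius $o(\sqrt n)$ carries a nonnegligible fraction of the escape probability from $X[0,n]$, which is a statement about harmonic measure on the range and does not follow from moment bounds on $\sum_{i\ne j}\mathbf 1_{X_i=X_j}$ or $\iint|M_s-M_t|^{-1}\,ds\,dt$. For the lower bound you must transport the Brownian equilibrium measure onto the lattice path; nearest-point projection through the coupling gives no a priori control on atom sizes, and again occupation-time estimates say nothing about this. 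If you try to repair the argument by first showing that fattening the path to a tube of radius $n^{1/2-\epsilon}$ changes the capacity by $1+o(1)$, you are led straight back to the intersection estimate that drives the paper's proof.
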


The law of large numbers for $(\Capa(X[0,n]))_{n}$ had already been obtained in dimension $5$ and larger by Jain and Orey \cite{JainOreyMR0243623}. In \cite{SRWCapCLT}, Asselah, Schapira and Sousi established a central limit theorem in dimension larger than or equal to $6$. To understand the model of random interlacement invented by Sznitman \cite{SznitmanMR2680403}, R\'{a}th and Sapozhnikov \cite{RathSapozhnikovMR2889752,RathSapozhnikovMR2819660} established moments and deviation bounds for the capacity of the union of ranges of paths. During the study of the simple random walk loop percolation on $\mathbb{Z}^d$ \cite{loop-perc-Zd}, together with Sapozhnikov, when $d=4$, we improved the upper bounds for the second moment of $\Capa(X[0,n])$ by showing that it is comparable with the square of the first moment. Theorem~\ref{thm: concentration capacity 4d} sharpens our result in \cite{loop-perc-Zd}, which implies a weak law of large numbers for $\Capa (X[0,n])_{n}$ on $\mathbb{Z}^4$. Soon after this and very recently, Asselah, Schapira and Sousi \cite{SRWCapZ4} greatly improved the result in $\mathbb{Z}^{4}$ by proving the strong law of large numbers and the central limit theorem. In another paper \cite{CapWienerSausageD4} by the same authors, the strong law of law numbers was established for the Wiener sausage, which is the continuous counterpart of the discrete simple random walk. We refer the reader to \cite{CapWienerSausageD4} for more references and historical remarks on the Wiener sausages.

Finally, we briefly outline the proof. The argument for Theorem~\ref{thm: concentration capacity 4d} is a refinement of that in \cite{loop-perc-Zd}. We consider two independent simple random walks $(X^{(0)}_i)_{i=0,\ldots,n}$ starting from $0$ and $(X^{(1)}_i)_{i\geq 0}$ starting from ``the infinity''. Equivalent to the estimation of $\Capa(X^{0}[0,n])$, we estimate the intersection probability of $(X^{(0)}_i)_{i=0,\ldots,n}$ and $(X^{(1)}_i)_{i\geq 0}$. We use the strong Markov property at time $\tau_1$, where $\tau_1$ is the first time that $(X^{(0)}_i)_{i=0,\ldots,n}$ meets the trajectory of $(X^{(1)}_i)_{i\geq 0}$. A key observation is that $\mathbb{P}[\tau_1/n\in\cdot|\tau_1\leq n]$ converges to the uniform distribution on $[0,1]$ when $n\to\infty$. Together with the sharp estimate
\[\mathbb{E}[\Capa(X[0,n])]\overset{n\to\infty}{\sim}\frac{\pi^2}{8}\frac{n}{\log n}\]
from \cite{SRWCapCLT}, we conclude the desired result. Theorem~\ref{thm: scaling limit capacity 3d} is proved via a coupling between SRW paths and Brownian motion paths. We crucially use the fact that two independent SRW (Brownian motion) paths  are very likely to intersect for $d=3$, by the result of Lawler \cite[Lemma~2.4,2.6]{LawlerMR1423466}.

\paragraph{Organization of the paper}
We introduce necessary notation in Section~\ref{sect: notation}. Then, we prove Theorem~\ref{thm: concentration capacity 4d} and \ref{thm: scaling limit capacity 3d} in separate sections.
\section{Notation}\label{sect: notation}
We collect several notation in the following.
\begin{itemize}
 \item $\ell_1$-balls on $\mathbb{Z}^d$: $B_{\mathbb{Z}^d}(x,r)=\{z\in\mathbb{Z}^d:|x-z|_1\leq r\}$ for $x\in\mathbb{Z}^d$ and $r\geq 0$.
 \item $\ell_1$-balls on $\mathbb{R}^d$: $B_{\mathbb{R}^d}(x,r)=\{z\in\mathbb{R}^d:|x-z|_1\leq r\}$ for $x\in\mathbb{R}^d$ and $r\geq 0$.
 \item Simple random walk: $(X_n)_{n\geq 0}$.
 \item Brownian motion: $(M_t)_{t\geq 0}$.
 \item Range of a SRW: $X[0,n]=\{X_0,\ldots,X_n\}$.
 \item Range of a Brownian motion: $M[0,t]=\cup_{s\in[0,t]}\{M_s\}$.
 \item First entrance time for a set $F$: $\tau(F)=\inf\{n\geq 0:X_n\in F\}$.
 \item Hitting time for a set $F$: $\tau^{+}(F)=\inf\{n\geq 1:X_n\in F\}$.
 \item Green function for SRWs: $G(x,y)=\mathbb{P}^{x}[\sum_{n\geq 0}1_{X_n=y}]$.
 \item Green function for Brownian motions on $\mathbb{R}^d$ ($d\geq 3$):
 \[G(x,y)=\int_{0}^{\infty}(2\pi t)^{-d/2}e^{-||x-y||_2^2/2t}=\frac{\Gamma(d/2)}{(d-2)\pi^{d/2}}||x-y||_2^{2-d}.\]
 \item SRW capacity of a set $F$: $\Capa(F)=\sum_{x\in F}\mathbb{E}^{x}[\tau^{+}(F)=\infty]$.
 \item Brownian motion capacity of a Borel set $F$:
 \begin{equation*}
  \Capa_{BM}(F)^{-1}=\inf\{\int\int G(x,y)\mu(\mathrm{d}x)\mu(\mathrm{d}y):\mu\text{ is a probability measure on }F\}.
 \end{equation*}
\end{itemize}

\section{\texorpdfstring{Four dimension: concentration of $\Capa(X[0,n])$ around its mean}{Four dimension: concentration of Cap(X[0,n]) around its mean}}
We prove Theorem~\ref{thm: concentration capacity 4d} in this section. Before that, we need to state three auxiliary lemmas.

\begin{lemma}[{\cite[Proposition 4.6.4]{LawlerMR2677157}}]\label{lem: hitting probability}
 For a transient graph and a subset $F$ of vertices, by last passage time decomposition,
\[\mathbb{P}^x[\tau(F)<\infty]=\sum\limits_{z\in F}G(x,z)\mathbb{P}^z[\tau^{+}(F)=\infty].\]
\end{lemma}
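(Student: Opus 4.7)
The plan is to prove this by the standard last-exit (last passage) decomposition of the event $\{\tau(F)<\infty\}$. Since the graph is transient, $G(x,z)<\infty$ for all $x,z$, and consequently each vertex of $F$ is visited at most finitely many times almost surely. Hence on the event $\{\tau(F)<\infty\}$ there is an a.s.\ well-defined last time $L\overset{\text{def}}{=}\sup\{n\geq 0: X_n\in F\}$ at which the walk visits $F$, and a corresponding last-visit location $X_L\in F$.

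The first step is to partition the event $\{\tau(F)<\infty\}$ according to the value of $(L,X_L)$:
\[
 \mathbb{P}^x[\tau(F)<\infty]=\sum_{z\in F}\sum_{k=0}^{\infty}\mathbb{P}^x[X_k=z,\ X_{k+j}\notin F\ \forall j\geq 1].
\]
For each term, I would apply the simple Markov property at time $k$: conditional on $X_k=z$, the future trajectory $(X_{k+j})_{j\geq 0}$ is an independent walk started from $z$, so the event $\{X_{k+j}\notin F\ \forall j\geq 1\}$ has probability exactly $\mathbb{P}^z[\tau^{+}(F)=\infty]$. This factors each summand as
\[
 \mathbb{P}^x[X_k=z]\cdot\mathbb{P}^z[\tau^{+}(F)=\infty].
\]

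Summing first over $k\geq 0$ collapses the time-sum into the Green function, using the definition $G(x,z)=\sum_{k\geq 0}\mathbb{P}^x[X_k=z]$, and then summing over $z\in F$ yields the claimed identity. The only point requiring care is the interchange of the infinite sum over $k$ with the probability, which is justified because the events $\{X_k=z,\ X_{k+j}\notin F\ \forall j\geq 1\}$ for different $k$ are disjoint (each corresponds to a different value of $L$), so no dominated convergence argument is needed. There is no real obstacle here; the main conceptual point is simply ensuring that $L$ is a.s.\ finite on $\{\tau(F)<\infty\}$, which follows immediately from transience of the underlying graph.
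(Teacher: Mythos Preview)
Your argument is correct and is precisely the standard last-exit decomposition that the lemma's name alludes to; the paper does not give its own proof but simply cites \cite[Proposition~4.6.4]{LawlerMR2677157}, where the same argument appears. One small caveat: your inference from ``each vertex of $F$ is visited finitely often'' to ``there is a well-defined last visit to $F$'' tacitly uses that $F$ is finite (for infinite $F$ a transient walk may still visit $F$ infinitely often, and indeed the identity can fail), so you should either add that hypothesis or note that in the paper $F$ is always a finite set such as $X^0[0,n]$.
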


\begin{lemma}[{\cite[Theorem 4.3.1]{LawlerMR2677157}}]\label{lem: Green function}
 For a simple random walk on $\mathbb{Z}^d$, $d\geq 3$, there exist $0<c(d)\leq C(d)<\infty$ such that
 $$c(d)(1+||x-y||_{\infty})^{2-d}\leq G(x,y)\leq C(d)(1+||x-y||_{\infty})^{2-d}.$$
 More precisely, $G(x,y)=\frac{d\Gamma(d/2)}{(d-2)\pi^{d/2}}(||x-y||_{2}+1)^{2-d}+O((||x-y||_2+1)^{-d})$ as $||x-y||_2\to \infty$.
\end{lemma}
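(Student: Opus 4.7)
The plan is to derive Lemma~\ref{lem: Green function} from the local central limit theorem (LCLT) for simple random walk on $\mathbb{Z}^d$ combined with a sum-to-integral argument. Since $G(x,y)=\sum_{n\geq 0}p_n(x,y)$ with $p_n(x,y)=\mathbb{P}^x[X_n=y]$, sharp pointwise control on $p_n$ is the main input. Starting from the Fourier representation
\[p_n(x,y)=(2\pi)^{-d}\int_{[-\pi,\pi]^d}\hat p(\theta)^n e^{-i\theta\cdot(x-y)}\,d\theta,\qquad \hat p(\theta)=d^{-1}\sum_{j=1}^d\cos\theta_j,\]
I would Taylor-expand $\hat p$ around its two maxima $\theta=0$ and $\theta=(\pi,\ldots,\pi)$ (the second reflecting the bipartiteness of $\mathbb{Z}^d$) and apply saddle-point analysis to obtain, for $n+\|x-y\|_1$ even and $\|x-y\|_2\lesssim\sqrt n$,
\[p_n(x,y)=2\bigl(d/(2\pi n)\bigr)^{d/2}\exp\!\bigl(-d\|x-y\|_2^2/(2n)\bigr)\bigl(1+O(1/n)\bigr),\]
with Gaussian tail decay outside this regime.

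Next I would split the Green function sum at $n_0=\|x-y\|_2^2$. The contribution from $n<n_0$ is $O(\|x-y\|_2^{-d})$ by the Gaussian bound, while the sum over $n\geq n_0$ is approximated by the integral
\[\int_0^\infty\bigl(d/(2\pi t)\bigr)^{d/2} e^{-d\|x-y\|_2^2/(2t)}\,dt=\frac{d\,\Gamma(d/2)}{(d-2)\pi^{d/2}}\|x-y\|_2^{2-d},\]
computed via the substitution $u=d\|x-y\|_2^2/(2t)$ and the identity $\Gamma(d/2-1)=2\Gamma(d/2)/(d-2)$. This delivers the leading asymptotic stated in the lemma.

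For the uniform two-sided bound $c(d)(1+\|x-y\|_\infty)^{2-d}\leq G(x,y)\leq C(d)(1+\|x-y\|_\infty)^{2-d}$, the small-distance regime is handled by $G(x,y)\leq G(x,x)=1/\mathbb{P}^x[\tau^+(\{x\})=\infty]<\infty$ (transience in $d\geq 3$) together with the last-passage identity $G(x,y)=G(y,y)\,\mathbb{P}^x[\tau(\{y\})<\infty]$ (Lemma~\ref{lem: hitting probability} applied to $F=\{y\}$) and a diffusive lower bound on the hitting probability; the large-distance regime follows directly from the leading asymptotic.

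The hard part is producing the error term $O((\|x-y\|_2+1)^{-d})$ rather than a weaker power. This requires the subleading Edgeworth corrections to $\hat p(\theta)^n$ to either vanish by the $\xi\mapsto -\xi$ symmetry of the step distribution or to cancel between the two critical points $\theta=0$ and $\theta=(\pi,\ldots,\pi)$, producing an $O(n^{-d/2-1})$ remainder in the LCLT whose sum over $n\geq \|x-y\|_2^2$ is indeed $O(\|x-y\|_2^{-d})$. I would follow the careful accounting of these cancellations in Chapter~4 of Lawler--Limic.
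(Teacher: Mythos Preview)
The paper does not prove this lemma at all: it is quoted verbatim as \cite[Theorem~4.3.1]{LawlerMR2677157} and used as a black box, so there is no in-paper argument to compare your proposal against. Your sketch is essentially the standard derivation found in that reference (LCLT via Fourier inversion, then sum-to-integral for the Green function), and the main-term computation you wrote out is correct.
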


\begin{lemma}[{\cite[Corollary~1.4]{SRWCapCLT}}]\label{lem: mean capacity asymptotics 4d}
 For a SRW $(X_n)_n$ on $\mathbb Z^4$,
 \begin{equation}\label{eq: asymp capacity 4d}
  \lim_{n\rightarrow\infty}\frac{\log n}{n}\mathbb{E}[\Capa(X[0,n])]=\frac{\pi^2}{8}.
 \end{equation}
\end{lemma}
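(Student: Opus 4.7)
\emph{Proof plan.} I would use the last-visit decomposition of the capacity. Every $x\in X[0,n]$ corresponds to a unique last-visit time $k\in\{0,\ldots,n\}$ with $X_k=x$ and $X_j\neq x$ for all $k<j\leq n$, so
\begin{equation*}
\Capa(X[0,n])=\sum_{k=0}^{n}1_{\{X_j\neq X_k,\,k<j\leq n\}}\,\mathbb{P}^{X_k}\!\left[\tilde X_i\notin X[0,n],\forall i\geq 1\right],
\end{equation*}
where $\tilde X$ is an independent SRW. Taking expectations, conditioning on the value of $X_k$, and exploiting the reversibility of SRW together with the Markov property at time $k$, the reversed past $(X_{k-i}-X_k)_{i=0}^{k}$ and the centred future $(X_{k+i}-X_k)_{i=0}^{n-k}$ become two independent SRWs $X^{(0)},X^{(1)}$ started from the origin, independent of a further SRW $\tilde X$ also from the origin, and
\begin{equation*}
\mathbb{E}[\Capa(X[0,n])]=\sum_{k=0}^{n}\mathbb{P}\!\left[X^{(1)}_j\neq 0,\;1\leq j\leq n-k;\;\tilde X[1,\infty)\cap\bigl(X^{(0)}[0,k]\cup X^{(1)}[0,n-k]\bigr)=\emptyset\right].
\end{equation*}

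The first (no-return) condition in each term factorises in the limit and contributes the escape probability $1/G(0,0)>0$. The main analytic content is then the non-intersection probability of $\tilde X$ with the union of two independent SRW ranges emanating from a common point, and this is where $d=4$ enters decisively: by Lawler's critical-dimension intersection estimates, this probability is of order $1/\log n$ uniformly for $k/n\in[\varepsilon,1-\varepsilon]$. Summing over $k$ would then produce the claimed $n/\log n$ growth.

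The principal obstacle I expect is pinning down the exact prefactor $\pi^2/8$. A soft order-of-magnitude argument only needs the value $1$ of the Brownian intersection exponent in $d=4$; the sharp constant instead requires an invariance-principle argument to replace the three discrete walks by three Brownian motions (for instance through a KMT coupling) and an explicit Gaussian computation of the Brownian non-intersection probability in terms of the Green function $G(x,y)=\frac{1}{2\pi^2}||x-y||_2^{-2}$. Finally, the regimes where $k$ is close to $0$ or to $n$ have to be handled by separate short-time bounds; by monotonicity of $\Capa$ these boundary contributions are $o(n/\log n)$ and do not disturb the leading constant.
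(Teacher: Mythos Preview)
The paper does not supply its own proof of this lemma: it is quoted verbatim as Corollary~1.4 of Asselah--Schapira--Sousi \cite{SRWCapCLT}, so there is no in-paper argument to compare your proposal against. That said, your last-visit decomposition and the reduction, via time-reversal and the Markov property, to the non-intersection probability of one walk $\tilde X$ against a two-sided walk $X^{(0)}[0,k]\cup X^{(1)}[0,n-k]$ are correct and are essentially the starting point of the argument in \cite{SRWCapCLT}; together with Lawler's $d=4$ intersection estimates this part of your plan would indeed yield the order $n/\log n$.

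The genuine gap is in your plan for the sharp constant. You propose to recover $\pi^2/8$ by a KMT coupling that replaces the three discrete walks by three Brownian motions and then performs ``an explicit Gaussian computation of the Brownian non-intersection probability''. This cannot work in $d=4$: four is precisely the critical dimension in which independent Brownian paths almost surely do \emph{not} intersect, so the continuum non-intersection probability is identically $1$ and carries no information whatsoever about the $1/\log n$ correction. The logarithmic factor is an intrinsically lattice (critical-dimension) effect that a naive invariance-principle limit erases; the Brownian Green function $\frac{1}{2\pi^{2}}\|x-y\|_{2}^{-2}$ you quote never enters through a non-intersection probability. In \cite{SRWCapCLT} the constant is instead extracted by a purely discrete route, via moments of the number of intersection points and the asymptotics of lattice Green-function sums governed by the constant $2/\pi^{2}$ of Lemma~\ref{lem: Green function}. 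As written, your outline would produce the correct order of magnitude but not the constant $\pi^{2}/8$.
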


It is known that the capacity of a set is closely related to the hitting probability of that set, see Lemma~\ref{lem: hitting probability}. We will prove Theorem~\ref{thm: concentration capacity 4d} by refining the argument in \cite[Lemma~2.4]{loop-perc-Zd}.

\begin{proof}[Proof of Theorem~\ref{thm: concentration capacity 4d}]
Let $(X^0_n)_{n\geq 0},(X^1_n)_{n\geq 0},(X^2_n)_{n\geq 0}$ be three independent simple random walks. Denote by $\mathbb{E}_{(i)}^{x}$ the expectation corresponding to the random walk $X^i$ with initial point $x$. Similarly, we define $(\mathbb{E}_{(i),(j)}^{x,y})_{i\neq j}$. For simplicity of notation, we denote by $\mathbb{E}^{x,y,z}$ (or $\mathbb{P}^{x,y,z}$) the expectation (or probability) corresponding to $X^0,X^1$ and $X^2$ with initial points $x,y$ and $z$, respectively. Recall that $X^0[0,n]$ is the range of $X^0$ up to time $n$. Similarly, we define $X^1[0,\infty)$ and $X^2[0,\infty)$.

Let $x_0 = (Kn,0,0,0)$, where $K$ will be sent to infinity in the end. By Lemmas \ref{lem: hitting probability} and \ref{lem: Green function},
\begin{align*}
 \Capa(X^{0}[0,n])=\mathbb{P}_{(1)}^{x_0}[X^{0}[0,n]\cap X^{1}[0,\infty)\neq\emptyset]\cdot s(4)K^{2}n^{2}(1+O(K^{-1})),
\end{align*}
where $s(4)=\frac{(d-2)\pi^{d/2}}{d\Gamma(d/2)}\Big|_{d=4}=\pi^2/2$. Hence,
\begin{align*}
 \mathbb{E}[\Capa(X^{0}[0,n])]=&s(4)K^{2}n^{2}(1+O(K^{-1}))\\
 &\times \mathbb{P}^{0,x_0}\left[X^0[0,n]\cap X^1[0,\infty)\neq\emptyset\right],\\
 \mathbb{E}[(\Capa(X^{0}[0,n]))^2]=&s(4)^2K^{4}n^{4}(1+O(K^{-1}))\\
 &\times \mathbb{P}^{0,x_0,x_0}\left[X^0[0,n]\cap X^1[0,\infty)\neq\emptyset,~X^0[0,n]\cap X^2[0,\infty)\neq\emptyset\right].
\end{align*}
For $i=1$ and $2$, define $\tau_i=\inf\{j\geq 0:X^0_j\in X^{i}[0,\infty)\}$. By symmetry,
\begin{equation}
\mathbb{P}^{0,x_0,x_0}\left[X^0[0,n]\cap X^1[0,\infty)\neq\emptyset,~X^0[0,n]\cap X^2[0,\infty)\neq\emptyset\right]\leq 2\mathbb{P}^{0,x_0,x_0}[\tau_1\leq\tau_2\leq n].
\end{equation}
By conditioning on $X^1$ and $X^2$ and then applying the strong Markov property for $X^0$ at time $\tau_1$,
\begin{align*}
 \mathbb{P}^{0,x_0,x_0}[\tau_1\leq\tau_2\leq n]=&\mathbb{E}^{0,x_0,x_0}\left[\tau_1\leq\tau_2,\tau_1\leq n,\mathbb{E}_{(0)}^{X^0_{\tau_1}}\left[1_{\{X^0[0,n-\tau_1]\cap X^2[0,\infty)\neq\emptyset\}}\right]\right]\\
 \leq&\mathbb{E}^{0,x_0,x_0}\left[\tau_1\leq n,\mathbb{E}_{(0)}^{X^0_{\tau_1}}\left[1_{\{X^0[0,n-\tau_1]\cap X^2[0,\infty)\neq\emptyset\}}\right]\right].
\end{align*}
Then, we take the expectation with respect to $X^2$ and get that
\begin{equation*}
 \mathbb{P}^{0,x_0,x_0}[\tau_1\leq\tau_2\leq n]\leq \mathbb{E}_{(0),(1)}^{0,x_0}[X^0[0,n]\cap X^1[0,\infty)\neq\emptyset,\sup\limits_{y\in B(0,n)}\mathbb{P}_{(0),(2)}^{y,x_0}[X^0[0,n-\tau_1]\cap X^2[0,\infty)\neq\emptyset]].
\end{equation*}
By last passage time decomposition (Lemma~\ref{lem: hitting probability}) and the Green function estimate (Lemma~\ref{lem: Green function}) (or by gradient estimate for harmonic functions \cite[Theorem~6.3.8]{LawlerMR2677157}), we have that
\[\sup\limits_{y\in B(0,n)}\mathbb{P}_{(0),(2)}^{y,x_0}[X^0[0,n-\tau_1]\cap X^2[0,\infty)\neq\emptyset]=(1+O(K^{-1}))\cdot\mathbb{P}_{(0),(2)}^{0,x_0}[X^0[0,n-\tau_1]\cap X^2[0,\infty)\neq\emptyset].\]
Hence,
\begin{equation}\label{eq: key1}
 \mathbb{P}^{0,x_0,x_0}[\tau_1\leq\tau_2\leq n]\leq (1+O(K^{-1}))\mathbb{E}_{(0),(1)}^{0,x_0}\left[\begin{array}{l}X^0[0,n]\cap X^1[0,\infty)\neq\emptyset,\\ \mathbb{P}_{(0),(2)}^{0,x_0}[X^0[0,n-\tau_1]\cap X^2[0,\infty)\neq\emptyset]\end{array}\right].
\end{equation}
We denote by $k(n)$ the averaged capacity $\mathbb{E}[\Capa(X^0[0,n])]$. Then,
\begin{align}
 \mathbb{E}[\Capa(X^0[0,n])^2]\leq &(1+O(K^{-1}))s(4)K^{2}n^{2}\mathbb{E}^{0,x_0}_{(0),(1)}[2k(n-\tau_1),\tau_1\leq n]\notag\\
 = & (1+O(K^{-1}))s(4)K^{2}n^{2}\mathbb{E}^{0,x_0}_{(0),(1)}[2k(n-\tau_1)|\tau_1\leq n]\mathbb{P}^{0,x_0}_{(0),(1)}[\tau_1\leq n]\notag\\
 = & (1+O(K^{-1}))k(n)\mathbb{E}^{0,x_0}_{(0),(1)}[2k(n-\tau_1)|\tau_1\leq n].
\end{align}
Since $\mathbb{P}[\tau_1\leq s]=\mathbb{P}^{0,x_0}_{(0),(1)}[X^{0}[0,s]\cap X^{1}[0,\infty)\neq\emptyset]=(1+O(K^{-1}))(s(4))^{-1}K^{-2}n^{-2}\mathbb{E}[\Capa(X^0[0,s])]$, we have that
\[\mathbb{P}[\tau_1\leq s|\tau_1\leq n]=(1+O(K^{-1}))k(s)/k(n).\]
Hence, by \eqref{eq: asymp capacity 4d}, we see that
\begin{equation}\label{eq: cvg in dist first hitting time}
 \mathbb{P}[\tau_1/n\in\cdot|\tau_1\leq n]\rightarrow \text{Uniform distribution on }[0,1]\text{, as }K,n\rightarrow\infty.
\end{equation}
By \eqref{eq: asymp capacity 4d} and \eqref{eq: cvg in dist first hitting time}, we have that
\[\lim_{K,n\rightarrow\infty}\mathbb{E}^{0,x_0}_{(0),(1)}[2k(n-\tau_1)|\tau_1<n]/k(n)=1\]
and consequently,
\[\lim_{n\rightarrow\infty}\Var(X^0[0,n])/k(n)^2=0.\]
\end{proof}

\section{\texorpdfstring{Three dimension: $(\Capa(X[0,n])/\sqrt{n})_n$ has a random limit}{Three dimension: Cap(X[0,n])/√n has a random limit}}
In this section, for $d=3$, we will show that $\Capa(X[0,n])/\sqrt{n}$ converges to the corresponding quantity of a Brownian motion. The reason is that two $3D$ Brownian motion paths (or simple random walk paths) are very likely to intersect when they get close, see \cite[Lemmas~2.4 and 2.6]{LawlerMR1423466}. As an application of Skorokhod embedding theorem, one could closely couple SRWs and Brownian motions. For these two reasons, with a high probability, a SRW path is as hittable as a Brownian motion path. Accordingly, by last passage time decomposition and the scaling invariance of Brownian motion, we prove Theorem~\ref{thm: scaling limit capacity 3d}, which affirms the conjecture in \cite[Section~6]{SRWCapCLT}, see the remark below.
\begin{rem}\label{rem: 3d no weak LLN}
 It was conjectured that $\Capa(X[0,n])/\mathbb{E}[\Capa(X[0,n])]$ has a random limit as $n\rightarrow\infty$ for $d=3$, see \cite[Section~6]{SRWCapCLT}. By Theorem~\ref{thm: scaling limit capacity 3d}, we confirm this conjecture. Indeed, by considering a ball containing $X[0,n]$, it was proved that there exists $C<\infty$ such that $\mathbb{E}[(\Capa(X[0,n]))^2]\leq Cn$, see the proof of \cite[Lemma~5]{RathSapozhnikovMR2819660}. Hence, $(\Capa(X[0,n])/\sqrt{n})_n$ is uniformly integrable and $\lim_{n\rightarrow\infty}\mathbb{E}[\Capa(X[0,n])]/\sqrt{n}=\mathbb{E}[\frac{1}{3\sqrt{3}}\Capa_{BM}(M[0,1])]$, which implies that $\Capa(X[0,n])/\mathbb{E}[\Capa(X[0,n])]$ converges in distribution towards $\Capa_{BM}(M[0,1])/\mathbb{E}[\Capa_{BM}(M[0,1])]$ as $n\rightarrow\infty$. Note that $\Var(\Capa_{BM}(M[0,1]))>0$ since $\mathbb{E}[\Capa_{BM}(M[0,1])]>0$ and $\mathbb{P}[|M_t|\leq\epsilon,\forall t\in[0,1]]>0$ for all $\epsilon>0$, which, by monotonicity of capacities, implies that $\mathbb{P}[\Capa_{BM}(M[0,1])<\epsilon]>0$, $\forall\epsilon>0$.
\end{rem}

\begin{proof}[Proof of Theorem~\ref{thm: scaling limit capacity 3d}]
 By Skorokhod embedding, there exists a coupling between a pair of independent SRWs and a pair of independent Brownian motions. To be more precise, there exists a probability space such that the following holds, see \cite[Lemma~3.1]{LawlerMR1423466}.
 \begin{itemize}
  \item $(X^{0}_n)_n$ and $(X^{1}_n)_{n}$ are both SRWs on $\mathbb{Z}^3$ starting from $0$.
  \item $(M^{0}_t)_t$ and $(M^{1}_t)_{t}$ are both Brownian motions on $\mathbb{Z}^3$ starting from $0$.
  \item $(X^{0},M^{0})$ is independent of $(X^{1},M^{1})$.
  \item For all $\epsilon>0$, there exist $\gamma>0$ and $C<\infty$ such that for all $n\geq 1$,
        \begin{equation}\label{eq: Skorokhod approximation}
         \mathbb{P}\left[\sum_{i}\max_{s\leq n}|X^{i}_{\lfloor 3s\rfloor}-M^{i}_s|_{1}>n^{1/4+\epsilon}\right]\leq C\cdot e^{-n^{\gamma}}.
        \end{equation}
 \end{itemize}
 We take $\epsilon\leq\frac{1}{1000}$. We define several events as follows:
 \begin{itemize}
  \item $E_1\overset{\text{def}}{=}\{\max_{s\leq n}|X_{\lfloor 3s\rfloor}^{0}-M_s^{0}|_{1}\leq n^{\frac{1}{4}+\epsilon}\}$.
  \item $E_2\overset{\text{def}}{=}\{X^{0}[0,n]\subset B_{\mathbb{Z}^3}(0,n^{\frac{1}{2}+\frac{\epsilon}{4}})\}$.
  \item $E_{3,SRW}\overset{\text{def}}{=}\left\{\sup_{z\in \Nbd(X^{0}[0,n],n^{\frac{1}{2}-\epsilon})\cap\mathbb{Z}^3}\mathbb{P}\left[X^{0}[0,n]\cap (z+X^{1}[0,\infty))=\emptyset|X^{0}[0,n]\right]< n^{-\delta}\right\}$,
  where
  \[\Nbd(A,s)=\cup_{x\in A}B_{\mathbb{R}^3}(x,s),\quad\text{for }A\subset\mathbb{R}^3\text{ and }s\geq 0.\]
  Similarly, we define
  \[E_{3,BM}\overset{\text{def}}{=}\left\{\sup_{z\in \Nbd(M^{0}[0,n/3],n^{\frac{1}{2}-\epsilon})}\mathbb{P}\left[M^{0}[0,n/3]\cap (z+M^{1}[0,\infty))=\emptyset|M^{0}[0,n/3]\right]< n^{-\delta}\right\}.\]
  We define $E_3=E_{3,SRW}\cup E_{3,BM}$.
 \end{itemize}
 As we mentioned above, by Skorokhod approximation, $\mathbb{P}[E_1^c]\leq C\cdot e^{-n^{\gamma(\epsilon)}}$ where $C<\infty$ does not depend on $n$. By union bounds and Hoeffding's inequality, $\mathbb{P}[E_2^c]\leq Cne^{-n^{\epsilon/2}/C}$ where $C<\infty$ does not depend on $n$. By \cite[Lemmas~2.4, 2.6]{LawlerMR1423466}, for all $N\geq 1$, $\exists\delta>0$ (in the definition of $E_3$) and $C<\infty$ such that for all $n\geq 1$, $\mathbb{P}[E_3^c]\leq C\cdot n^{-N}$. We define $E=E_1\cap E_2\cap E_3$, take $N=2$ and choose $\delta$ accordingly such that
 \begin{equation}\label{eq: proba Ec}
  \exists C<\infty,\mathbb{P}[E^c]\leq C\cdot n^{-N}.
 \end{equation}
 Take $y_n\in\mathbb{Z}^d$ such that $||y_n||_2=\lfloor n^{\frac{1}{2}+\epsilon}\rfloor$. By the independence between $(X^0,M^0)$ and $(X^1,M^1)$, the last passage time decomposition and the Green function estimate, we have that
 \begin{equation}\label{eq: srw capacity and hitting proba}
  \Capa(X^{0}[0,n])1_{E}=\frac{2\pi}{3}(1+o(1))1_{E}n^{\frac{1}{2}+\epsilon}\mathbb{P}\left[X^{0}[0,n]\cap(y_n+X^{1}[0,\infty))\neq\emptyset|X^{0}[0,n],M^{0}[0,n/3]\right],
 \end{equation}
 and similarly, by \cite[Theorem~8.8, Theorem~8.27 and Definition~8.18]{MortersPeresMR2604525},
 \begin{equation}\label{eq: bm capacity and hitting proba}
  \Capa(M^{0}[0,n/3])1_{E}=2\pi(1+o(1))1_{E}n^{\frac{1}{2}+\epsilon}\mathbb{P}\left[M^{0}[0,n/3]\cap(y_n+M^{1}[0,\infty))\neq\emptyset|X^{0}[0,n],M^{0}[0,n/3]\right].
 \end{equation}
 We will show that $\eqref{eq: srw capacity and hitting proba}=\frac{1}{3}(1+o(1))\cdot\eqref{eq: bm capacity and hitting proba}$ which is equivalent to
 \begin{multline}\label{eq: hitting equivalence}
  1_{E}\mathbb{P}\left[X^{0}[0,n]\cap(y_n+X^{1}[0,\infty))\neq\emptyset|X^{0}[0,n],M^{0}[0,n/3]\right]\\
  =(1+o(1))1_{E}\mathbb{P}\left[M^{0}[0,n/3]\cap(y_n+M^{1}[0,\infty))\neq\emptyset|X^{0}[0,n],M^{0}[0,n/3]\right].
 \end{multline}
 We first find several quantities, which are asymptotically equivalent to the left hand side of \eqref{eq: hitting equivalence}. By the definition of $E_3$ and the strong Markov property of $X^{1}$, we get that
 \begin{multline}\label{eq: SRW path and tube plus SRW}
  1_{E}\mathbb{P}\left[X^{0}[0,n]\cap(y_n+X^{1}[0,\infty))\neq\emptyset|X^{0}[0,n],M^{0}[0,n/3]\right]\\
  =(1+o(1))1_{E}\mathbb{P}\left[\Nbd(X^{0}[0,n],n^{\frac{1}{2}-\epsilon})\cap(y_n+X^{1}[0,\infty))\neq\emptyset|X^{0}[0,n],M^{0}[0,n/3]\right]\\
  =(1+o(1))1_{E}\mathbb{P}\left[\Nbd(X^{0}[0,n],n^{\frac{1}{2}-4\epsilon})\cap(y_n+X^{1}[0,\infty))\neq\emptyset|X^{0}[0,n],M^{0}[0,n/3]\right].
 \end{multline}
 Next, we will show that $X^1$ could be replaced by $M^{1}$ in the following sense:
 \begin{equation}\label{eq: SRW path and tube and tube plus BM}
  1_{E}\mathbb{P}\left[\Nbd(X^{0}[0,n],n^{\frac{1}{2}-2\epsilon})\cap(y_n+M^{1}[0,\infty))\neq\emptyset|X^{0}[0,n],M^{0}[0,n/3]\right]=(1+o(1))\cdot\eqref{eq: SRW path and tube plus SRW},
 \end{equation}
 which would follow from Skorokhod approximation up to time $n^{1+8\epsilon}$ and the following three equations:
 \begin{multline}\label{eq: SRW path hit small SRW tube hitting time bound}
  1_{E}\mathbb{P}\left[\Nbd(X^{0}[0,n],n^{\frac{1}{2}-4\epsilon})\cap(y_n+X^{1}[0,\infty))\neq\emptyset|X^{0}[0,n],M^{0}[0,n/3]\right]\\
  =(1+o(1))1_{E}\mathbb{P}\left[\Nbd(X^{0}[0,n],n^{\frac{1}{2}-4\epsilon})\cap(y_n+X^{1}[0,n^{1+8\epsilon}))\neq\emptyset|X^{0}[0,n],M^{0}[0,n/3]\right],
 \end{multline}
 \begin{multline}\label{eq: SRW path hit big SRW tube hitting time bound}
  1_{E}\mathbb{P}\left[\Nbd(X^{0}[0,n],n^{\frac{1}{2}-\epsilon})\cap(y_n+X^{1}[0,\infty))\neq\emptyset|X^{0}[0,n],M^{0}[0,n/3]\right]\\
  =(1+o(1))1_{E}\mathbb{P}\left[\Nbd(X^{0}[0,n],n^{\frac{1}{2}-\epsilon})\cap(y_n+X^{1}[0,n^{1+8\epsilon}))\neq\emptyset|X^{0}[0,n],M^{0}[0,n/3]\right],
 \end{multline}
 and that
 \begin{multline}\label{eq: BM path hit SRW tube hitting time bound}
  1_{E}\mathbb{P}\left[\Nbd(X^{0}[0,n],n^{\frac{1}{2}-2\epsilon})\cap(y_n+M^{1}[0,\infty))\neq\emptyset|X^{0}[0,n],M^{0}[0,n/3]\right]\\
  =(1+o(1))1_{E}\mathbb{P}\left[\Nbd(X^{0}[0,n],n^{\frac{1}{2}-2\epsilon})\cap(y_n+M^{1}[0,n^{1+8\epsilon}/3))\neq\emptyset|X^{0}[0,n],M^{0}[0,n/3]\right]
 \end{multline}
 Indeed, by union bounds, Markov property, the last passage time decomposition and the Green function estimate, there exists $c>0$ such that
 \begin{multline}\label{eq: sphtub}
  1_{E}\mathbb{P}\left[\Nbd(X^{0}[0,n],n^{\frac{1}{2}-4\epsilon})\cap(y_n+X^{1}[n^{1+8\epsilon},\infty))\neq\emptyset|X^{0}[0,n],M^{0}[0,n/3]\right]\\
  \leq 1_{E}\mathbb{P}[|y_n+X^1_{n^{1+8\epsilon}}|\leq n^{\frac{1}{2}+2\epsilon}]+1_{E}\cdot c\cdot n^{-\frac{1}{2}-2\epsilon}\Capa(\Nbd(X^{0}[0,n],n^{\frac{1}{2}-4\epsilon}))\\
  \leq 1_{E}\cdot c\cdot (n^{-6\epsilon}+n^{-\frac{1}{2}-2\epsilon}\Capa(\Nbd(X^{0}[0,n],n^{\frac{1}{2}-4\epsilon}))).
 \end{multline}
 By the last passage time decomposition, the Green function estimate, monotonicity and translation invariance of the capacity and the estimate for the capacity of a ball, there exists $c>0$ such that
 \begin{multline}\label{eq: sphtlb}
  1_{E}\mathbb{P}\left[\Nbd(X^{0}[0,n],n^{\frac{1}{2}-4\epsilon})\cap(y_n+X^{1}[0,\infty))\neq\emptyset|X^{0}[0,n],M^{0}[0,n/3]\right]\\
  \geq 1_{E}\cdot c\cdot n^{-\frac{1}{2}-\epsilon}\Capa(\Nbd(X^{0}[0,n],n^{\frac{1}{2}-4\epsilon}))\\
  \geq 1_{E}\cdot c\cdot n^{-\frac{1}{2}-\epsilon}\Capa(B(0,n^{\frac{1}{2}-4\epsilon}))\geq 1_{E}\cdot c^2\cdot n^{-5\epsilon}.
 \end{multline}
 Comparing \eqref{eq: sphtub} with \eqref{eq: sphtlb}, we see that \eqref{eq: SRW path hit small SRW tube hitting time bound} holds. And \eqref{eq: SRW path hit big SRW tube hitting time bound} and \eqref{eq: BM path hit SRW tube hitting time bound} could be derived in a similar way. Next, we derive several quantities which are equivalent to the right hand side of \eqref{eq: hitting equivalence}. Similarly to \eqref{eq: SRW path and tube plus SRW}, we obtain that
 \begin{multline}\label{eq: BM path and big tube}
  1_{E}\mathbb{P}\left[\Nbd(M^{0}[0,n/3],n^{\frac{1}{2}-\epsilon})\cap(y_n+M^{1}[0,\infty))\neq\emptyset|X^{0}[0,n],M^{0}[0,n/3]\right]\\
  =(1+o(1))1_{E}\mathbb{P}\left[M^{0}[0,n/3]\cap(y_n+M^{1}[0,\infty))\neq\emptyset|X^{0}[0,n],M^{0}[0,n]\right].
 \end{multline}
 and
 \begin{multline}\label{eq: BM path and small tube}
  1_{E}\mathbb{P}\left[\Nbd(M^{0}[0,n/3],n^{\frac{1}{2}-4\epsilon})\cap(y_n+M^{1}[0,\infty))\neq\emptyset|X^{0}[0,n],M^{0}[0,n/3]\right]\\
  =(1+o(1))1_{E}\mathbb{P}\left[M^{0}[0,n/3]\cap(y_n+M^{1}[0,\infty))\neq\emptyset|X^{0}[0,n],M^{0}[0,n]\right].
 \end{multline}
 By the definition of $E_1$ (the Skorokhod approximation), for $n$ sufficient large, we have that
 \[\Nbd(M^{0}[0,n/3],n^{\frac{1}{2}-4\epsilon})\subset\Nbd(X^{0}[0,n],n^{\frac{1}{2}-2\epsilon})\subset\Nbd(M^{0}[0,n/3],n^{\frac{1}{2}-\epsilon})\]
 and hence, $\eqref{eq: BM path and small tube}\leq \eqref{eq: SRW path and tube and tube plus BM}\leq \eqref{eq: BM path and big tube}$. Therefore, \eqref{eq: hitting equivalence} holds and equivalently,
 \begin{equation}
  \Capa(X^{0}[0,n])1_{E}=\frac{1}{3}(1+o(1))\Capa(M^{0}[0,n/3])1_{E}.
 \end{equation}
 By Brownian scaling, $\Capa(M^{0}[0,n/3])$ has the same distribution as $\frac{\sqrt{n}}{\sqrt{3}}\Capa(M^{0}[0,1])$. Hence, together with \eqref{eq: proba Ec}, we get that $\Capa(X^{0}[0,n])/\sqrt{n}$ converges in distribution towards $\frac{1}{3\sqrt{3}}\Capa(M^{0}[0,1])$ as $n\rightarrow\infty$.
\end{proof}

\paragraph{Acknowledgement}
The author thanks Sapozhnikov for valuable advices and useful comments.

\providecommand{\bysame}{\leavevmode\hbox to3em{\hrulefill}\thinspace}
\providecommand{\MR}{\relax\ifhmode\unskip\space\fi MR }
\providecommand{\MRhref}[2]{%
  \href{http://www.ams.org/mathscinet-getitem?mr=#1}{#2}
}
\providecommand{\href}[2]{#2}

\end{document}